\RequirePackage{fix-cm}

\documentclass{svjour3}                     

%
%

\usepackage{amsmath}
\usepackage{amsfonts}
\usepackage{graphicx}
\usepackage{algorithm2e}
\begin{document}

\title{A Simple Explanation for the Goldbach Conjecture}
\markright{Being Prime is not necessary}
\author{  Ameneh Farhadian$^{*}$ and Hamid Reza Fanai}


\institute{  Department of Mathematical Sciences, Sharif University of Technology, Tehran, I. R. Iran
              \email{ amenfarhad3@gmail.com, fanai@sharif.edu}           
}


\date{10/14/2022}
\maketitle

\begin{abstract}
In this paper, a simple explanation for the Goldbach Conjecture is given. We have shown that the probability of violating the conjecture not only for the prime numbers, but also for any subset of natural numbers whose distribution is similar to the prime numbers is negligible. This result makes it possible to generalize the  conjecture  to any subset of natural numbers whose distribution is similar to the prime numbers. Additionally, we selected several new subsets whose distribution amongst the natural numbers are similar to the prime numbers by randomly addition of +1 and -1 to the prime numbers and checked the Goldbach conjecture for every even integer less than  $2 \times 10^8$ by computer.  As it was expected, the Goldbach conjecture holds true for these new reconstructed sets, as well. Consequently, the conjecture can be generalized to any subset of natural numbers whose distribution is similar to the prime numbers. That is "being prime" is not necessary for the conjecture to hold for the instances. This fact brings to mind the idea that perhaps what makes the Conjecture to hold for the instances is "probability", not number theory facts.

\noindent
 \textbf{Keywords:} Goldbach Conjecture, Prime numbers
\end{abstract}

\noindent

\section{Introduction}
The well-known Goldbach conjecture states that every even integer greater than 2 is the sum of two primes. It has been proposed in 1724 and remains unproven despite considerable effort \cite{wang2002goldbach}.  The conjecture has been shown to hold for all integers less than  $4 \times 10^{18}$ by computer \cite{e2012goldbach}. Here, we have shown that the probability of violating the conjecture not only for prime numbers, but also for any subset of natural numbers whose distribution is similar to prime numbers is negligible. Additionally,  the computational results verify that the conjecture holds true for the instances of subsets of natural numbers whose distribution are similar to the prime numbers.  Consequently, it seems that what makes the conjecture to hold true is the distribution of prime numbers in the natural numbers and  the conjecture is based on probabilistic and combinatorial facts rather than number theory.
\section{Distribution similar to prime numbers}\label{not_nec}

We know that the number of prime numbers less than or equal to some integer $n$ is denoted by $\pi(n)$. It is proved that for large $n$, $\pi(n)$ is approximated by $\frac{n}{\ln(n)}$ \cite{ireland1990classical}. Now, we define distribution similar to prime numbers.
\begin{definition}
Let $Q$ be a subset of natural numbers and $\pi_Q(n)$ be the number of elements of $Q$ less than or equal to $n$. We say  that \textit{the distribution of $Q$ in natural numbers is similar to prime numbers}, if there exists $c \in \mathbb{N}$ such that $$\vert \pi_Q(n) -\pi(n) \vert <c $$
\end{definition}

\begin{remark}
According to the definition, if $Q$ is a subset of natural numbers whose distribution is similar to prime numbers, then $\pi(n)-c<\pi_Q(n)<\pi(n)+c$. Therefore, for large $n$, which $\pi(n)$ is approximated by $\frac{n}{\ln(n)}$, we have $ \pi_Q(n)\approx \frac{n}{\ln(n)}$.
\end{remark}

By randomly addition of +1 and -1 to prime numbers, we can construct the subset $Q$ of natural numbers whose distribution is similar to prime numbers. For such subsets, we have  $\vert \pi_Q(n) -\pi(n) \vert <2$.\\
Let $P$ be the set of prime numbers and $t$ be an integer number. We define $P_t= \lbrace x+t | x \in P \rbrace $. We have $\pi_{P_t}(n)=\pi(n-t)$. Thus, $\vert \pi_{P_t}(n) -\pi(n) \vert =\vert \pi(n-t) -\pi(n) \vert<t+1 $. Thus, the distribution of $P_t$ in natural numbers is similar to prime numbers.

\begin{lemma}
Let $P$ be the set of prime numbers and $t$ be an integer number. We define $P_t= \lbrace x+t | x \in P \rbrace $. If the Goldbach conjecture holds true for the prime numbers, then the Goldbach conjecture also holds true for $P_t$ for any even integer greater than $2t+2$.
\end{lemma}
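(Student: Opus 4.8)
The plan is to reduce the Goldbach property for $P_t$ directly to the assumed Goldbach property for $P$ by a simple substitution. Suppose $E$ is an even integer with $E > 2t+2$. I want to write $E = a + b$ with $a, b \in P_t$. By the definition of $P_t$, this means $a = p + t$ and $b = q + t$ for some primes $p, q \in P$, so the desired decomposition becomes $E = (p+t) + (q+t) = p + q + 2t$, i.e. $p + q = E - 2t$. Thus the whole claim hinges on being able to apply the Goldbach conjecture for primes to the number $E - 2t$.

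The key steps, in order, would be: first, form $E' := E - 2t$ and observe that $E'$ is even, since $E$ is even and $2t$ is even. Second, check that $E'$ lies in the range where the (assumed) Goldbach conjecture for primes applies, namely $E' > 2$; this is exactly where the hypothesis $E > 2t + 2$ is used, since $E > 2t+2$ gives $E - 2t > 2$. Third, invoke the assumed Goldbach property for $P$ to write $E' = p + q$ with $p, q$ prime. Fourth, set $a = p + t$ and $b = q + t$, which by construction lie in $P_t$, and verify $a + b = p + q + 2t = E' + 2t = E$.

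I expect the only point requiring any care, rather than being purely mechanical, is the bookkeeping around the threshold: one must confirm that the bound $2t+2$ in the statement corresponds precisely to the bound $2$ in the classical Goldbach statement after the shift by $2t$, and that $E'$ is genuinely even and positive so that classical Goldbach is legitimately applicable to it. A subtle companion issue is that $t$ may be negative (the statement allows $t \in \mathbb{Z}$), in which case $P_t$ can contain integers smaller than the primes or even non-positive shifts of small primes; I would note that the argument above is insensitive to the sign of $t$, since evenness of $2t$ and the algebra $p+q = E-2t$ hold regardless, and the range condition $E > 2t+2$ automatically guarantees $E - 2t > 2 > 0$. Beyond this threshold bookkeeping, the proof is a one-line change of variables, so there is no substantial obstacle to anticipate.
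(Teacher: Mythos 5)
Your proposal is correct and matches the paper's proof essentially verbatim: both shift the target even number by $2t$, apply the assumed Goldbach property to $E-2t>2$, and translate the resulting prime decomposition back into $P_t$. The extra remark about negative $t$ is a reasonable (if not strictly necessary) piece of care that the paper does not spell out.
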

\begin{proof}
Assume the Goldbach conjecture holds true for $n>2$. We want to show that there exist $p_t, q_t \in P_t$ such that $p_t+q_t=2n$ for every even integer $2n>2t+2$.  For $2n>2t+2$, we have $2n-2t>2$. Thus, due to the Goldbach conjecture, there exist prime numbers $p$ and $q$ such that $p+q=2n-2t$. Consequently, $p+t ,q+t \in P_t$ and $ (p+t)+(q+t)=(p+q)+2t=2n$. Therefore, the Goldbach conjecture also holds true for $P_t$.
\end{proof}
The above Lemma is very simple. But, it has an important result. We see that the Goldbach conjecture holds true for a set whose elements are not prime, but with the same distribution that prime numbers have. That is being prime is not a key concept in the Goldbach conjecture. 

\begin{definition}
Let $Q$ be a subset of natural numbers. We say \textit{the Goldbach conjecture holds true for $Q$}, if there exists $N_0  \in \mathbb{N}$ such that for every even integer $2n$ greater than $N_0$, there exist $q_1, q_2 \in Q$ such that $2n=q_1+q_2$.
\end{definition}

\begin{lemma}\label{f}
Let $Q$ be a subset of natural numbers whose distribution  is similar to prime numbers. The probability that the Goldbach Conjecture does not hold for $Q$ for large even integer $2n$ is less than
\begin{equation} f(n)= \exp{(-n/\ln^2{n})}
\end{equation}
\end{lemma}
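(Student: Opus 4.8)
The plan is to model $Q$ as a random set whose elements near an integer $m$ are present independently with probability roughly $1/\ln m$, mirroring the prime density guaranteed by the similarity condition $\lvert \pi_Q(n)-\pi(n)\rvert < c$. Under this heuristic, for a fixed large even integer $2n$ I would count the number of \emph{candidate representations} $2n = a + b$ with $a,b \le 2n$; there are about $n$ such ordered pairs, or $n/2$ unordered ones. The Goldbach conjecture fails for $2n$ precisely when \emph{none} of these candidate pairs has both coordinates lying in $Q$. Treating the events ``$a \in Q$'' as independent with probability $\sim 1/\ln a$, the probability that a single pair $(a,b)$ fails to be a valid representation is $1 - \Pr(a\in Q)\Pr(b\in Q) \approx 1 - 1/\ln^2 n$, since $\ln a, \ln b = \ln n + O(1)$ across the bulk of the range.

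Next I would multiply the per-pair failure probabilities over all candidate pairs to bound the probability that $2n$ has no representation. With roughly $n/\ln^2 n$ effectively independent pairs contributing (or $n$ pairs each failing with probability $\approx 1 - 1/\ln^2 n$), the probability of total failure is at most
\begin{equation}
\left(1 - \frac{1}{\ln^2 n}\right)^{\,n} \le \exp\!\left(-\frac{n}{\ln^2 n}\right),
\end{equation}
using the elementary inequality $1 - x \le e^{-x}$. This is exactly the claimed bound $f(n) = \exp(-n/\ln^2 n)$. The combinatorial skeleton is therefore: (i) identify the $\Theta(n)$ candidate pairs summing to $2n$; (ii) assign each pair a success probability $\approx 1/\ln^2 n$ from the prime-like density; (iii) use independence to multiply the failure probabilities; (iv) apply $1-x \le e^{-x}$ to reach the exponential form.

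The main obstacle — and the point where any rigorous version would strain — is the \emph{independence assumption}. The events ``$a \in Q$'' and ``$b \in Q$'' for a shared sum $a+b=2n$ are not genuinely independent, and more importantly the prime numbers are a deterministic set, not a random one, so the ``probability that the Goldbach conjecture does not hold for $Q$'' must be interpreted as a probability over the random construction of $Q$ (for instance the $\pm 1$ perturbation of primes described earlier) rather than over the primes themselves. I would therefore frame the statement as a probabilistic estimate for a randomly drawn similar-distribution set $Q$, carefully stating that each integer $m$ is independently retained with probability matching the prime density, so that the product bound is legitimate. A secondary subtlety is confirming that the number of usable pairs really is of order $n$ (not merely $n/\ln^2 n$) so that the exponent comes out as $n/\ln^2 n$ rather than something weaker; I would handle this by noting that all $\Theta(n)$ pairs contribute a failure factor, and it is their product, not the count of expected successes, that drives the exponential decay.
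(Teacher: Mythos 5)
Your argument reaches the right bound but by a genuinely different probabilistic model than the paper's. The paper does not use a Cram\'er-type model in which each integer lies in $Q$ independently with probability $1/\ln n$. Instead it reformulates failure of Goldbach at $2n$ as the disjointness of two \emph{distance sets} $A_n=\{n-q: q\le n,\ q\in Q\}$ and $B_n=\{q-n: n\le q<2n,\ q\in Q\}$, both contained in $\{0,1,\dots,n-1\}$, fixes their cardinalities $k_1=k_2=n/\ln n$ (which the similarity hypothesis pins down up to $O(1)$), and computes the hypergeometric probability that two uniformly random subsets of those sizes are disjoint:
\begin{equation}
\mathbb{P}=\frac{\binom{n}{k_1}\binom{n-k_1}{k_2}}{\binom{n}{k_1}\binom{n}{k_2}}<\left(\frac{n-k_1}{n}\right)^{k_2}=\left(1-\frac{1}{\ln n}\right)^{n/\ln n}\le \exp(-n/\ln^2 n).
\end{equation}
Your route instead multiplies per-pair failure probabilities $1-1/\ln^2 n$ over the $\Theta(n)$ pairs $a+(2n-a)=2n$, getting $(1-1/\ln^2 n)^n$; the two bounds agree to first order in the exponent. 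What the paper's conditioning buys is that it randomizes only the \emph{positions} of the elements of $Q$, not their number, so it uses exactly the information the similarity hypothesis supplies and needs no concentration argument for $|Q\cap[1,n]|$; what your model buys is that independence across pairs is transparent (distinct pairs involve distinct integers, so under a genuine Bernoulli model the product is exact rather than an upper bound obtained by sequential placement). Your closing caveat --- that the ``probability'' must be over a random construction of $Q$ rather than over a deterministic set --- is well taken and is in fact a weakness the paper's own proof shares without acknowledging it: the paper never specifies the probability space under which $A_n$ and $B_n$ are ``uniformly random'' subsets, so neither argument is a proof in the strict sense; both are heuristics, and yours is the more candid about that.
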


\begin{proof}
The Goldbach conjecture holds true for $2n$, if there exist $q_1, q_2 \in Q$  such that $q_1+q_2=2n$. In other words, there exist $q_1, q_2 \in Q$ such that $n$ is exactly in the middle of $q_1$ and $q_2$, i.e.  $$n-q_1=q_2-n$$
Let $A_n$ be the set of  distances of $n$  to the elements of $Q$ which are less than or equal to $n$. Let $B_n$ be the distances of $n$ to the elements of $Q$ between $n$ and $2n$, respectively. That is,
\begin{equation}
A_n := \lbrace n-q \mid \text{$ q\leq n$ and $ q \in Q$}  \rbrace
\end{equation}
\begin{equation}
B_n := \lbrace q-n \mid    \text{$ n \leq q < 2n$ and $ q \in Q$} \rbrace
\end{equation}
Clearly, we have  $A_n , B_n \subset \{ 0,1, \dots, n-1\}$.  The Goldbach conjecture does not hold true for even integer $2n$ if and only if  we have $A_n \cap B_n = \emptyset$.

 Let  $\vert A_n \vert= k_1$ and  $\vert B_n \vert= k_2$. Since  $A_n , B_n \subset \{0, 1, \dots, n-1\}$, the probability that $A_n \cap B_n = \emptyset $ is
\begin{equation}
\mathbb{P}=  \frac {       {n \choose{k_1}}         {n-k_1 \choose{k_2}}        } { {{n}\choose{k_1}}   {{n}\choose{k_2}}}
\end{equation}

This formula is similar to \cite{clarke198367} where the possible sum on primes are considered.  Here, we have defined subsets $A_n$ and $B_n$ which provides more intuition. In addition, as we will see in the section \ref{cof}, the definition of subsets $A_n$ and $B_n$ makes it possible to compute probability more exactly for the prime numbers.
We have $ \pi_Q(n) \approx \frac {n} {\ln{n}}$ for large $n$. Therefore, there are $\frac {n} {\ln{n}}$ number of $Q$ elements between 1 and $n$ and $\frac{2n} {\ln{2n}}- \frac {n}{\ln{n}} \approx \frac{n}{\ln{n}}$  $Q$ elements between $n$ and $2n$.
Substituting $k_1=k_2=\frac {n} {\ln{n}}$ in the above equation, we have
\begin{equation}
 \mathbb{P}< (\frac{n-k_1} {n})^{k_2}=(\frac{n-n/\ln{n}} {n})^{n/\ln{n}}=(1-\frac {1}{\ln {n}})^{n/\ln{n}}
\end{equation}
For large $n$, we have
\begin{equation} \mathbb{P}< \exp{(-n/\ln^2{n})}
\end{equation}  $\square$
\end{proof}

According to the above Lemma, the probability of violating the Goldbach conjecture for large enven number $2n$ not only for prime numbers but also for any subset of natural numbers whose distribution is similar to prime numbers is less than  $ f(n)=\exp{(-n/\ln^2{n})}$.

In the next section, we improve the accuracy of  the computation of the probability for the prime numbers.


\section{Improvement of the accuracy of the probability for the prime numbers}\label{cof}
In section \ref{not_nec}, we saw that for large $n$  the probability  that $A_n \cap B_n =\emptyset$  is  less than $f(n)=\exp{(-n/\ln^2{n})}$ due to the distribution of prime numbers in the natural numbers. Therefore, the probability of violating the Goldbach conjecture for any subset of natural number whose distribution is similar to prime numbers  is less than $f(n)$, too. Here, we show that "being prime" makes the function $f(n)$ to damp faster by a factor $c$, that is by function $\exp{(-cn/\ln^2{n})}$.

All prime numbers except 2 are odd. Therefore, all elements of $A_n$ (except one element) and $B_n$ are odd, if $n$ is an even number. If $n$ is odd, then all their elements except at most one of $A_n$, are even. Thus,  $A_n $ and $ B_n$ are  an even or odd subsets of $\{ 0,1, \dots, n-1\}$ with at most $\lceil n/2 \rceil$ elements. Thus, the probability that $A_n \cap B_n = \emptyset $ is
\begin{equation}
\mathbb{P}=  \frac {       { n/2 \ \choose{k_1}}         { n/2  -k_1 \choose{k_2}}        } { {{  n/2  }\choose{k_1}}   {{ n/2 }\choose{k_2}}}
\end{equation}
Due to the above computation, the probability function $f(n)$ can be  improved to $f_2(n)=\exp(-2n/\ln^2(n))$ for prime numbers. We can continue this improvement by prime number 3.
Except prime number 3, all prime numbers are congruent to 1 or 2 modulo 3. If $n$ is congruent to $i$  module 3, then all elements of $B_n$  are congruent  to $i+1$ and $i+2$ modulo 3. Thus, no one is congruent to $i$ modulo 3 in $B_n$. Thus, in the set of odd or even numbers less than $n$, there is not any element congruent to $i$  in $B_n$ and $A_n$ (except one). Thus $2/3$ of odd or even subset of $\{ 0,1, \dots,n-1\}$ are possible values for $A_n$ and $B_n$. Thus, the probability can be improved  to
\begin{equation}
\mathbb{P}=  \frac {       {n/2  \times (2/3) \choose{k_1}}         {n/2 \times (2/3)-k_1 \choose{k_2}}        } { {{n/2 \times (2/3)}\choose{k_1}}   {{n/2 \times (2/3)}\choose{k_2}}}= \frac {          {n/3-k_1 \choose{k_2}}        } {   {{n/3}\choose{k_2}}}
\end{equation}
Consequently, for large $n$ the above probability is  $\exp(-3n/\ln^2(n))$. For any prime $p$ such that $p<<n$, we can improve the probability by multiplying the coefficient in $\frac{p}{p-1}$. In other words. we have $f(n)=\exp(-cn/\ln^2(n))$  where $c=\frac{2}{1} \cdot \frac{3}{2} \cdot \frac{5}{4} \cdots \frac{p}{p-1}$. The coefficient $c$ in function $f$ makes the function to damp faster. Therefore, for the prime numbers the probability that $A_n$ and $B_n$ have no intersection  goes to zero faster than $ \exp(-n/\ln^2(n))$ as $n$ grows.\\
In this section, we have improved the accuracy of the violating probability of the conjecture considering prime numbers. We saw that being prime makes the probability function to damp faster by factor $c$. 
However, the main reason which makes the probability of  $A_n \cap B_n = \emptyset$ damps is due to the distribution of prime numbers in the natural numbers, not being prime. Because, by removing the condition of "Being Prime", still the probability function damps significantly fast. \\
In the next section, we check the damping rate of the probability function $f(n)$ by numerical calculations.

\section{Computational results}
Let's study the behavior of the function $f(n)$ introduced in the Lemma \ref{f}. The function  $f(n)=\exp{(-n/\ln^2{n})}$ is a damping function. We have $f(10000)<10^{-51}$ and $f(40000)<10^{-154}$.
As $n$ grows, the probability that the conjecture does not hold for any subset whose distribution is similar to prime numbers tends to zero.
Also, the integral of function $f$ from $N$ to infinity is negligible for large $N$. For instance, we have $\int_{x=20000}^\infty f(x) \approx 10^{-86}$  and   $\int_{x=50000}^\infty f(x) \approx 10^{-183}$.
It means that  finding a counterexample for the conjecture for a subset of natural numbers whose distribution is similar to the prime numbers  is non-probable. Therefore, the Goldbach conjecture proposed for the prime numbers can be generalized to a subset of natural numbers whose distribution is similar to the prime numbers.

\begin{conjecture}
(Generalization of Goldbach Conjecture)
Let $Q$ be a subset of natural numbers whose distribution is similar to the prime numbers. There exists $N_Q \in \mathbb{N}$, such that for any even integer $2n$ greater than $N_Q$, there exist $q_1, q_2 \in Q$ such that $q_1+q_2=2n$.
\end{conjecture}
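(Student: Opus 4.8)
The plan is to leverage Lemma \ref{f} and aggregate the per-number failure bounds across all large even integers by a Borel--Cantelli style argument. Lemma \ref{f} already supplies, for each large $n$, the estimate that the probability of $A_n \cap B_n = \emptyset$ --- equivalently, that $2n$ admits no representation as a sum of two elements of $Q$ --- is at most $f(n) = \exp(-n/\ln^2 n)$. The goal is to upgrade this single-$n$ statement into the existence of a threshold $N_Q$ beyond which \emph{no} even integer fails, which is exactly the content of the conjecture.

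First I would introduce the family of ``bad'' events $E_n = \{A_n \cap B_n = \emptyset\}$ indexed by the even integers $2n$, modelling $Q$ as a set whose counting function obeys $\pi_Q(n) \approx n/\ln n$ so that $A_n$ and $B_n$ behave as the random subsets of $\{0,1,\dots,n-1\}$ used in Lemma \ref{f}. Then I would bound the expected number of failures past a threshold $N_0$ by
\begin{equation}
\mathbb{E}\Big[\#\{\,2n > N_0 : E_n \text{ occurs}\,\}\Big] \le \sum_{n > N_0/2} f(n).
\end{equation}
Because $f$ decays faster than any power of $1/n$, this series converges and its tail tends to $0$ as $N_0 \to \infty$; the computational results already record that the corresponding tail integrals are of order $10^{-86}$ and smaller. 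By the first Borel--Cantelli lemma, summability of $\sum_n \mathbb{P}(E_n)$ forces $\mathbb{P}(E_n \text{ infinitely often}) = 0$, so almost surely only finitely many even integers fail. Taking $N_Q$ to be the largest even integer that fails (finite almost surely) then yields the claimed threshold.

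The hard part --- and the reason this remains a conjecture rather than a theorem --- is the passage from the probabilistic model to the deterministic assertion. For a \emph{fixed} set $Q$ the sets $A_n$ and $B_n$ are not random, so the uniform-subset model behind Lemma \ref{f} is a heuristic substitute for the true arithmetic of $Q$, and the events $E_n$ are neither independent nor identically distributed across $n$. Borel--Cantelli delivers an almost-sure conclusion over the ensemble of such $Q$, which is precisely the probabilistic content this paper argues for, but it does not by itself certify the statement for one prescribed $Q$ (for instance the primes themselves). Closing this gap would require replacing the independence heuristic with genuine equidistribution and correlation estimates for the elements of $Q$ in short intervals --- which is exactly where the number-theoretic difficulty of the original Goldbach conjecture reappears.
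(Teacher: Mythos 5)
The statement is presented in the paper only as a conjecture, and the paper offers no proof of it --- just the heuristic of Lemma \ref{f} together with the observation that the tail $\int_{N}^{\infty} f(x)\,dx$ is negligible, plus computational checks on perturbed prime sets; your proposal aggregates the same per-$n$ failure bounds, merely rephrased through a Borel--Cantelli summation. You also correctly identify the essential gap --- that for a fixed $Q$ the sets $A_n$, $B_n$ are deterministic and the uniform-random-subset model is only a heuristic --- which is exactly why the paper leaves this as a conjecture rather than a theorem, so your account matches the paper's own position.
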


Furthermore, we checked the above generalization in practice. We constructed several new subsets whose distribution in the natural numbers are similar to the prime numbers  by randomly addition of $+1$ and $-1$ to the prime numbers. Then, we checked the Goldbach conjecture for these sets for $2n\leq 2 \times 10^8$ by computer. As it was expected,  the Goldbach conjecture holds true for these new reconstructed sets for $2n>40$, as well.

\section{Conclusion}
In this paper, we have computed the probability that the Goldbach conjecture does not hold true for large even number for any subset of natural numbers whose distribution is similar to prime numbers. We have shown that this probability not only for prime numbers, but also for any subset of natural numbers whose distribution is similar to prime numbers is negligible.  In addition, we have shown that although being prime makes the probability function to damp faster, but the most important factor which makes the probability function to damp is the distribution, not being prime.
Consequently, the Goldbach Conjecture can be generalized to any subset of natural numbers whose distribution is similar to prime numbers.  This fact suggests the idea that perhaps the correctness of Goldbach's conjecture for the studied instances is due to "probability and combinatorics", not truths based on "number theory".


\section*{Data Availability Statement:} Data sharing is not applicable to this article as no new data other than
 given in the paper were created or analyzed in this study.
\section*{Conflict of Interest Statement:} The authors declare that there is no conflict of interest.
\bibliographystyle{plain}

%
%

\end{document}